\email{alberto.marcone@dimi.uniud.it}
\email{antonio@math.uchicago.edu}
\email{shore@math.cornell.edu}
\thanks{The question answered in this paper was asked by Thierry Coquand upon attending a talk of
the first author about the results of \cite{MLE} and \cite{Mont07}.\\
Montalb\'an was partially supported by NSF grant DMS-0901169. Shore was partially supported
by NSF Grant DMS-0852811.}
\newtheorem{theorem}{Theorem}[section]
\theoremstyle{definition}
\newtheorem{definition}[theorem]{Definition}
\newtheorem{obs}[theorem]{Observation}
\newtheorem{question}[theorem]{Question}
\numberwithin{equation}{section}
\begin{document}
\title{Computing Maximal Chains}
\author{Alberto Marcone}
\address{Dipartimento di Matematica e Informatica, Universit\`{a} di Udine, 33100
Udine, Italy}
\author{Antonio Montalb\'an}
\address{Department of Mathematics, University of Chicago, Chicago, IL 60637, USA}
\author{Richard A. Shore}
\address{Department of Mathematics, Cornell University, Ithaca, NY 14853, USA}
\date{Saved: January 16, 2012.}

\subjclass[2010]{Primary: 03D80; Secondary: 06A07}

\begin{abstract}
In \cite{Wolk}, Wolk proved that every well partial order (wpo) has a maximal
chain; that is a chain of maximal order type. (Note that all chains in a wpo
are well-ordered.) We prove that such maximal chain cannot be found
computably, not even hyperarithmetically: No hyperarithmetic set can compute
maximal chains in all computable wpos. However, we prove that almost every
set, in the sense of category, can compute maximal chains in all computable wpos.

Wolk's original result actually shows that every wpo has a strongly maximal
chain, which we define below. We show that a set computes strongly maximal
chains in all computable wpo if and only if it computes all hyperarithmetic sets.
\end{abstract}

\maketitle

\section{Introduction}

In this paper we study well partial orders (from now on wpos), that is
well-founded partial orders with no infinite antichains. In \cite{Wolk}, Wolk
proved that every wpo has a maximal chain, that is a chain of maximal order
type. We are interested in two related problems. One is determining the
computational complexity of such chains and the other is the complexity of the
process that takes one from the wpo to such a chain.

If $P$ is partially ordered by ${\leq_{P}}$, $C\subseteq P$ is a chain in $P $
if the restriction of ${\leq_{P}}$ to $C$ is linear. If $P$ is a well-founded
partial order then every chain in $P$ is a well-order and we define the height
of $P$, $\operatorname{ht}(P)$, to be the supremum of all ordinals which are
order types of chains in $P$. For $x\in P$, let $\operatorname{ht}_{P}(x)$ be
the supremum of all ordinals which are order types of chains in $P_{(-\infty
,x)}=\{\,{y\in P}\mid{y<_{P}x}\,\}$. It is easy to see that $\operatorname{ht}%
(P)=\sup\{\,{\operatorname{ht}_{P}(x)+1}\mid{x\in P}\,\}$ and that
$\operatorname{ht}_{P}(x)=\sup\{\,{\operatorname{ht}_{P}(y)+1}\mid{y<_{P}%
x}\,\}$.

\begin{definition}
Let $C$ be a chain in $P$:

\begin{itemize}
\item $C$ is \emph{maximal} if it has order type $\operatorname{ht}(P)$;

\item $C$ is \emph{strongly maximal} if, for every $\alpha<\operatorname{ht}%
(P)$, there exists a (necessarily unique) $x\in C$ with $\operatorname{ht}%
_{P}(x)=\alpha$.
\end{itemize}
\end{definition}

Of course, strongly maximal chains are maximal. While maximal chains are
maximal with respect to order type, strongly maximal chains are maximal with
respect to inclusion as well (although there exist wpos with chains which are
maximal with respect to inclusion but neither strongly maximal nor maximal).

Wolk (\cite[Theorem 9]{Wolk}) actually proved the following theorem:

\begin{theorem}
\label{Wolk} Every wpo has a strongly maximal chain.
\end{theorem}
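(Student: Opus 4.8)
I need to prove that every wpo $P$ has a strongly maximal chain — a chain $C$ such that for every $\alpha < \hh(P)$ there is $x \in C$ with $\hh_P(x) = \alpha$. The natural approach is a greedy/transfinite construction: build $C$ by choosing, for each $\alpha < \hh(P)$ in increasing order, an element $x_\alpha$ with $\hh_P(x_\alpha) = \alpha$ that lies above all previously chosen elements. Let me think about why this could fail and how to fix it.

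Let me think about the construction more carefully...
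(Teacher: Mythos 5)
There is a genuine gap: your proposal stops before any argument begins. You state a strategy (transfinite greedy choice of $x_\alpha$ with $\hh_P(x_\alpha)=\alpha$ above the previously chosen elements) and then break off with ``let me think about the construction more carefully,'' so nothing is actually proved. Worse, the strategy as literally stated does not work without a substantial additional idea: it is simply false that every element of height $\alpha$ has elements of every larger height above it, so an unlucky early choice can strand the construction. For instance, if $P$ is the disjoint union of a single point $a$ and a three-element chain, then $a$ has height $0$ but nothing of height $1$ lies above it; a greedy construction that picks $x_0=a$ dies immediately. So the real content of the theorem is precisely the part you deferred: one must restrict the choices to elements that remain extendable (say, elements $x$ such that for every $\beta$ with $\hh_P(x)<\beta<\hh(P)$ some element of height $\beta$ lies above $x$) and prove that such elements exist at every level and that the property can be propagated upward, including through limit stages.

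Note also that your sketch never uses the wpo hypothesis, and the theorem fails for general well-founded partial orders (e.g.\ the disjoint union of finite chains of unbounded length has height $\omega$ but no infinite chain). In a wpo the decisive point is that any two elements of the same height are incomparable, so each level is a finite antichain; this finiteness is what powers the pigeonhole arguments needed to show that extendable elements exist at every level and that the chain can be continued past limits. This is how the cited proofs of Wolk and Harzheim proceed (the paper itself only cites the result), and any completed version of your greedy approach would have to incorporate an argument of this kind; as it stands, the proposal contains neither the key lemma nor any indication of where the wpo assumption enters.
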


Wolk's result appears also in Harzheim's book (\cite[Theorem 8.1.7]{Harz}).
The result was extended to a wider class of well founded partial orders by
Schmidt (\cite{Schm81}) in the countable case, and by Milner and Sauer
(\cite{MS}) in general.

We can now state precisely the questions we are interested in:

\begin{question}
\label{q1} If $P$ is a computable wpo, how complicated must maximal and
strongly maximal chains in $P$ be?
\end{question}

\begin{question}
\label{q2}How complicated must any function taking the wpo $P$ to such a
maximal chain be?
\end{question}

As usual, the computability of $P$ means that both $P\subseteq\mathbb{N}$ and
${\leq_{P}}\subseteq\mathbb{N}\times\mathbb{N}$ are (Turing) computable sets.
In our answers to these questions, we will also measure complexity in terms of
Turing computability as well as the hyperarithmetic hierarchy which is built
by iterating the Turing jump (halting problem) along computable well
orderings. (Definitions and basic facts can be found, for example, in
\cite{Sacks}.)

We answer Question \ref{q1} for strongly maximal chains by showing (Theorem
\ref{smax}) that, for every hyperarithmetic set $X$, there is a computable wpo
$P$ such that any strongly maximal chain in $P$ computes $X$. Thus any set
computing strongly maximal chains in every computable wpo must lie above all
the hyperarithmetic sets. For maximal chains we show that is far from true.
Indeed, almost every set, in the sense of category, can compute maximal chains
in every computable wpo (Theorem \ref{gen}) while such \textquotedblleft
generic\textquotedblright\ sets do not compute any noncomputable
hyperarithmetic set. On the other hand, we also show (Theorem \ref{maxnot})
that the chains must be highly noncomputable in the sense that for every
hyperarithmetic set $X$ there is a computable wpo $P$ with no maximal chain
computable from $X$.

We answer Question \ref{q2} by showing that there is no computable or even
hyperarithmetical procedure for constructing even maximal chains in computable
wpos. To be more precise, any function $f(e,n)$ such that, for every
computable wpo $P$ with index $e$, the function of $n$ determined by $f $ and
$e$ ($\lambda nf(e,n)$) is (the characteristic function of) a maximal chain in
$P$ must itself compute every hyperarithmetic set $X$ (Theorem \ref{nonunif}).
Other information about this question is also provided in \S \ref{nonuniform}.

Theorem \ref{Wolk} is somewhat similar to the better known result of de Jongh
and Parikh (\cite{JP}):

\begin{theorem}
\label{dJP} Every wpo $P$ has a maximal linear extension, i.e.\ there exists a
linear extension of $P$ such that every linear extension of $P$ embeds into
it. We call such a linear extension a maximal linear extension.
\end{theorem}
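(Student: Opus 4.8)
The plan is to reduce the theorem to the statement that the order types of the linear extensions of $P$ have a largest element, and to prove that by transfinite induction along a well-founded relation on wpos that $P$ itself furnishes.

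First, every linear extension $L$ of $P$ is a well-order: an infinite $L$-descending sequence $x_{0}>_{L}x_{1}>_{L}\cdots$ would, by the defining property of a wpo (no infinite bad sequence), contain $i<j$ with $x_{i}\le_{P}x_{j}$, hence $x_{i}\le_{L}x_{j}$, contradicting $x_{j}<_{L}x_{i}$. Since for well-orders $\al$ embeds into $\beta$ exactly when $\al\le\beta$, a linear extension $L$ of $P$ is one into which every linear extension of $P$ embeds precisely when $\ot(L)=\max\set{\ot(M)}{M\text{ a linear extension of }P}$. This set of ordinals is nonempty and (the linear extensions of $P$ forming a set) bounded, so it has a supremum $o(P)$; thus the existence of a maximal linear extension is equivalent to $o(P)$ being attained, which is what I must show. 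I will freely use the monotonicity $o(Q)\le o(P)$ for $Q\subseteq P$, valid because any linear extension of $Q$ extends to one of $P$ without decreasing its order type.

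For $x\in P$ put $P_{\not\ge x}=\set{y\in P}{y\not\ge_{P}x}$, a down-set of $P$ and again a wpo. The relation ``$Q=P_{\not\ge x}$ for some $x\in P$'' is well-founded: an infinite descending chain $P=P_{0}\supsetneq P_{1}\supsetneq\cdots$ with $P_{n+1}=(P_{n})_{\not\ge x_{n}}$ unwinds to $x_{0},x_{1},\dots\in P$ with $x_{j}\not\le_{P}x_{i}$ whenever $j<i$, an infinite bad sequence — which a wpo forbids. So I argue by induction along this relation, with inductive hypothesis that $o(P_{\not\ge x})$ is attained for every $x\in P$. Two facts then follow. (a) Every linear extension $L$ of $P$ satisfies $\ot(L)\le\sup\set{o(P_{\not\ge x})+1}{x\in P}$: the case $\ot(L)=0$ is trivial; if $\ot(L)$ is a successor its last element $m$ is $P$-maximal, so $P_{\not\ge m}=P\setminus\{m\}$ and $\ot(L)\le o(P\setminus\{m\})+1$; if $\ot(L)$ is a limit then for each $\nu<\ot(L)$ the first $\nu$ elements of $L$ form a down-set contained in $P_{\not\ge x}$ for any $x$ occurring later, whence $o(P_{\not\ge x})\ge\nu$ by monotonicity. (b) Conversely, for each $x$ the linear extension of $P_{\not\ge x}$ of order type $o(P_{\not\ge x})$ supplied by the hypothesis, followed by any linear extension of $\uparrow x=\set{y\in P}{y\ge_{P}x}$, is a linear extension of $P$ (all $P$-comparabilities across the partition $P=P_{\not\ge x}\sqcup\uparrow x$ go from the down-set into the up-set), of order type at least $o(P_{\not\ge x})+1$. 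Hence $o(P)=\sup\set{o(P_{\not\ge x})+1}{x\in P}=:\si$.

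It remains to realize $\si$. Suppose first the supremum defining $\si$ is attained, say $o(P_{\not\ge x^{*}})+1=\si$. Then $x^{*}$ must be $P$-maximal, since otherwise $\uparrow x^{*}$ has a linear extension of order type at least $2$ and the construction in (b) would yield a linear extension of $P$ of order type at least $o(P_{\not\ge x^{*}})+2>\si$; so $P_{\not\ge x^{*}}=P\setminus\{x^{*}\}$, and appending $x^{*}$ to the optimal linear extension of $P\setminus\{x^{*}\}$ realizes $\si$. The main obstacle is the remaining case, in which $\si$ is a limit attained by no single $x$ — trivially so when $P=\om$, but genuinely so already for the disjoint sum $\om\sqcup\om$ of two chains, where one must pick the extension ``first copy, then second copy'' of order type $\om\cdot 2$. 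Here I would find a chain $\langle x_{\xi}\rangle$ in $P$, of limit length and with no upper bound, along which $o(P_{\not\ge x_{\xi}})$ is cofinal in $\si$; choose a tower $\langle M_{\xi}\rangle$ of optimal linear extensions of the nested down-sets $P_{\not\ge x_{\xi}}$ in which each $M_{\xi}$ is an initial segment of the next; and set $L=\bigcup_{\xi}M_{\xi}$. Since $\bigcap_{\xi}\uparrow x_{\xi}=\es$ this $L$ is a linear extension of all of $P$, and $\ot(L)=\sup_{\xi}\ot(M_{\xi})=\si$. The delicate point — where I expect the real work to lie — is the simultaneous existence of such a chain of elements and such a coherently nested tower of optimal extensions, i.e.\ that no order type is lost in passing to the limit; granting it, the extension of order type $o(P)$ is a maximal linear extension, and by the first paragraph every linear extension of $P$ embeds into it.
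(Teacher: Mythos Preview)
The paper does not supply its own proof of this theorem; it is quoted as a known result of de~Jongh and Parikh and used only as background. So there is no paper proof to compare against, and I assess your argument on its own.

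Your reduction to showing that $o(P):=\sup\{\ot(L):L\text{ a linear extension of }P\}$ is actually realized, the well-foundedness of the relation $Q\prec P\Longleftrightarrow Q=P_{\not\ge x}$ for some $x\in P$, and the identity $o(P)=\sup_{x\in P}\bigl(o(P_{\not\ge x})+1\bigr)$ are all correct and cleanly argued; the case where the supremum is attained is fine. The gap --- which you yourself flag with ``granting it'' --- is the limit case: you have constructed neither the chain $(x_\xi)$ nor the coherent tower $(M_\xi)$ of \emph{optimal} extensions with each an initial segment of the next. The chain itself is not the problem (any sequence $(y_n)$ with $o(P_{\not\ge y_n})\to\sigma$ has, by the wpo property, a $\le_P$-increasing subsequence, and that subsequence can have no upper bound, as you note). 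The tower, however, is the whole difficulty. For it to exist at successor steps you would need
\[
o(D_{\xi+1})=o(D_\xi)+o(D_{\xi+1}\setminus D_\xi),
\]
equivalently $o(Q)=o(Q_{\not\ge x})+o(\uparrow_Q x)$ with $Q=P_{\not\ge x_{\xi+1}}$ and $x=x_\xi$. But this identity fails in general: for $Q=\omega\sqcup\{*\}$ and $x=0\in\omega$ one has $o(Q)=\omega+1$ while $o(Q_{\not\ge 0})+o(\uparrow 0)=1+\omega=\omega$, so no optimal extension of $Q$ begins with $*$. You have not argued that your particular choice of chain avoids this obstruction, nor how you would handle limit stages of the index $\xi$ when $P$ is uncountable.

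This is not a cosmetic omission: the limit case is exactly where the content of de~Jongh--Parikh lies. Their argument proceeds instead through the Cantor normal form of $o(P)$, locating an element $x$ that splits $P$ into $P_{\not\ge x}$ and $\uparrow x$ with $o$-values matching the normal-form decomposition, and then recursing on the two pieces; the additive indecomposability of the summands $\omega^{\alpha_i}$ is precisely what prevents the loss of order type that your tower risks. As written, your proposal is an outline that stops short of the theorem's main idea.
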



In \cite{Mont07} the second author answered the analogues of Questions
\ref{q1} and \ref{q2} for maximal linear extensions. His answer for the first
question is very different than ours for maximal and strongly maximal chains
but essentially the same for the second.

\begin{theorem}
\label{Montalban} Every computable wpo has a computable maximal linear
extension, yet there is no hyperarithmetic way of computing (an index for) a
computable maximal linear extension from (an index for) the computable wpo.
\end{theorem}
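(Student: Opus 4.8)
This theorem has a positive half and a negative half, which I would treat separately; throughout, write $o(P)$ for the order type of a maximal linear extension of $P$ as in Theorem~\ref{dJP}.

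\emph{The positive half.} The plan is to make the proof of Theorem~\ref{dJP} effective. Given a computable wpo $P$, I would build a maximal linear extension $L$ by a transfinite greedy process: keep an initial segment $I\subseteq P$, already linearly ordered, that is to sit at the bottom of $L$; pass to the residual $Q=P\setminus I$, which is an up-set of $P$ and so again a wpo; note that $\min(Q)$ is a \emph{finite} antichain; and append to $I$ the element $p\in\min(Q)$ for which $1+o(Q\setminus\{p\})$ is as large as possible (breaking ties, say, by least index). Along the lines of de Jongh and Parikh's analysis, the resulting $L$ has order type $o(P)$ and hence is a maximal linear extension. The only point requiring care for computability is that each step asks us to compare the ordinals $o(Q\setminus\{p\})$ for the finitely many candidates $p$; I would handle this by showing that $o(R)$ is a computable ordinal uniformly for the computable residuals $R$ of $P$ — for a computable wpo this comes out of de Jongh and Parikh's recursion together with the finiteness of every level, and in fact gives a single computable bound $o(R)<\gamma$ depending only on $\hh(P)$. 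With these comparisons available, $L$ is computable.

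\emph{The negative half.} Suppose, toward a contradiction, that $f$ is hyperarithmetic and that $\Phi_{f(e)}$ is always (a code for) a computable maximal linear extension of $\Phi_e$ when the latter is (a code for) a computable wpo. I would exploit the fact that the index set $\mathrm{WPO}=\{\,e\mid\Phi_e\text{ is a computable wpo}\,\}$ is $\Pi^1_1$-complete, hence not hyperarithmetic: fix a $\Pi^1_1$-complete set $A$ with uniformly computable witnesses $T_n$ for ``$n\in A$'', and build a computable map $n\mapsto e_n$ such that (a) for $n\in A$ the object $\Phi_{e_n}$ is a genuine computable wpo (so $f(e_n)$ is meaningful) while (b) from $f(e_n)$ an \emph{arithmetic} feature of $\Phi_{f(e_n)}$ decides whether $n\in A$ — for $n\notin A$ one must arrange either that $\Phi_{e_n}$ is still a wpo with that feature visibly failing, or that $\Phi_{e_n}$ fails to be a wpo in a manner still detectable in $f(e_n)$. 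If this succeeds, composing $f$ with the arithmetic post-processing decides $A$ hyperarithmetically, contradicting the choice of $A$.

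\emph{The main obstacle.} Designing the map $n\mapsto e_n$ is the crux, and I expect it to be genuinely delicate. By the positive half every maximal linear extension is \emph{computable}, so no single one of them can encode the non-hyperarithmetic fact ``$n\in A$''; the $\Pi^1_1$-hardness must instead live in the \emph{choice of index} and be recoverable from $f(e_n)$ only because $f$ is hyperarithmetic. This blocks the obvious attempts: one cannot just let $o(\Phi_{e_n})$ jump (from $\om$ to $\om+1$, say) according to whether $n\in A$, since deciding ``$n\in A$'' in order to lay down the partial order needs non-computable information; and one cannot stay inside a uniformly computable family of wpos, since there $\sup_n o(\Phi_{e_n})$ is a single computable ordinal and an order-type test detects only arithmetic conditions. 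Routing the argument through ``$\Phi_{f(e_n)}$ is a well-ordered maximal linear extension of $\Phi_{e_n}$'' also fails, since that property is itself $\Pi^1_1$ (both ``well-ordered'' and ``maximal'' are). So the construction must essentially use that $\mathrm{WPO}$ is not hyperarithmetic — e.g.\ via a carefully engineered computable reduction of $A$ to $\mathrm{WPO}$ for which a \emph{correct} index $f(e_n)$, when one exists, already carries the answer arithmetically — and striking that balance is where the real work lies.
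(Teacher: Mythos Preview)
First, this theorem is not proved in the present paper: it is quoted from \cite{Mont07}, so there is no in-paper proof against which to compare your proposal. I comment on the proposal on its own terms.

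\emph{Positive half.} The greedy scheme is the natural first idea, but the step you flag as ``the only point requiring care'' is a genuine gap, not a routine detail. Knowing that each $o(Q\setminus\{p\})$ is a computable ordinal---even with a uniform computable bound $\gamma$ depending only on $P$---does not let you \emph{compare} these ordinals computably: deciding which of two given computable well-orders below $\gamma$ is longer already sits high in the hyperarithmetic hierarchy (at a level that grows with $\gamma$), and the de Jongh--Parikh recursion does not hand you notations lying along a single computable path through Kleene's $\mathcal{O}$ on which comparison would be trivial. So as written your procedure is not computable. Montalb\'an's argument in \cite{Mont07} has to circumvent exactly this obstacle by a different device; the nonuniformity asserted in the second clause of the theorem is precisely a reflection of the fact that the ``right'' choices cannot be read off effectively from an index for $P$.

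\emph{Negative half.} What you have written is not a proof but an honest diagnosis of why several natural attacks fail, ending with ``striking that balance is where the real work lies.'' So this half is simply missing. Your instinct that a $\Pi^1_1$-hardness argument underlies it is sound, but the construction of the computable map $n\mapsto e_n$ with the stated properties is exactly the content of the proof, and you have not supplied it.
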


These results illustrate several interesting differences between the analysis
of complexity in terms of computability strength as done here and axiomatic
strength in the sense of reverse mathematics as is done in \cite{MLE}. (See
\cite{Sim09} for basic background in reverse mathematics whose general goal is
to determine precisely which axiomatic systems are both necessary and
sufficient to prove each theorem of classical mathematics.) From the viewpoint
of reverse mathematics, all of the theorems analyzed computationally here and
in \cite{Mont07} are equivalent. Indeed, in \cite{MLE} the first and third
author showed that Theorem \ref{dJP} and Theorem \ref{Wolk} (indeed even the
version for maximal chains) for countable wpos are each equivalent (over
$\mathsf{RCA}_{0}$) to the same standard axiom system, $\mathsf{ATR}_{0}$. As
we have explained, however, the computational analysis of these three theorems
in the sense of Question \ref{q1} are quite different.

Computable partial orders all have computable maximal linear extensions
\cite{Mont07}. Computable wpos all have hyperarithmetic maximal and even
strongly maximal chains as is shown by the proof in $\mathsf{ATR}_{0}$ of
Theorem \ref{Wolk} in \cite{MLE}. However, strongly maximal chains for
computable wpos must be of arbitrarily high complexity relative to the
hyperarithmetic sets while maximal chains can be computably incomparable with
all noncomputable hyperarithmetic sets. Yet another level of computational
complexity within the theorems axiomatically equivalent to $\mathsf{ATR}_{0}$,
is provided by K\"{o}nig's duality theorem (every bipartite graph $G$ has a
matching $M$ such that there is a cover of $G$ consisting of one vertex from
each edge in $M$). (See \cite{AMS92} for definitions.) Here \cite{AMS92} and
\cite{Sim94} show that this theorem for countable graphs is equivalent to
$\mathsf{ATR}_{0}$. On the other hand, \cite[Theorem 4.12]{AMS92} shows that
there is a single computable graph $G$ such that any cover as required by the
theorem already computes every hyperarithmetic set and so this $G$ certainly
has no such hyperarithmetic cover.

Another, less natural phrasing of our theorems produces a yet different
phenomena. If one asks, for every partial order, for either a witness that it
is not a wpo or a (strongly) maximal chain then one adds on the well known
possibilities inherent in producing descending sequences in nonwellfounded
partial orderings. A more natural (or at least seemingly so) example of a
similar behavior is determinacy for open ($\Sigma_{1}^{0}$) or clopen
($\Delta_{1}^{0}$) sets. Both versions of determinacy are reverse
mathematically equivalent to $\mathsf{ATR}_{0}$ \cite[Theorem V.8.7]{Sim09}.
Computationally, the second always has hyperarithmetic solutions (strategies)
for computable games and they are cofinal in the hyperarithmetic degrees while
the former has computable instances with no hyperarithmetic solutions (again
computing a path through a nonwellfounded tree) \cite{Bla72}. Thus, we have at
least four or five different levels of computational complexity for theorems
all axiomatically equivalent to $\mathsf{ATR}_{0}$. The phenomena exhibited by
our analysis of the existence of maximal chains seems to be new.

\section{Notation, terminology and basic observations}

In this section we fix our notation about partial orders, make a simple but
crucial observation about downward closed sets in computable wpos, and recall
the notion of hyperarithmetically generic set.

If $P$ is partially ordered by ${\leq_{P}}$ and $x,y \in P$ we write $x <_{P}
y$ for $x \leq_{P} y$ and $x \neq y$, and $x \,|_{P}\, y$ for $x \nleq_{P} y
\nleq_{P} x $.

We denote by $P_{[x,y)}$ the partial order obtained by restricting ${\leq_{P}%
}$ to the set $\{\,{z \in P}\mid{x \leq_{P} z <_{P} y}\,\}$. The notations
$P_{[x,y]}$ and $P_{(x,y)}$ are defined similarly, while $P_{[x,\infty)}$ and
$P_{(-\infty,x)}$ are obtained by restricting the order relation respectively
to $\{\,{z \in P}\mid{x \leq_{P} z}\,\}$ and $\{\,{z \in P}\mid{z <_{P}
x}\,\}$. Notice that if $P$ is computable so are all these partial orders.

\begin{definition}
A set $C \subseteq P$ is cofinal in $P$ if for every $x \in P$ there exists $y
\in C$ such that $x \leq_{P} y$.
\end{definition}

\begin{definition}
A set $I \subseteq P$ is an ideal in $P$ if it is downward closed (i.e.\ $x
\in I$ and $y \leq_{P} x$ imply $y \in I$) and for every $x,y \in I$ there
exists $z \in I$ such that $x \leq_{P} z$ and $y \leq_{P} z$.
\end{definition}

\begin{definition}
Given $x_{0},...,x_{k}\in P$ we let
\[
P_{x_{0}, \dots, x_{k}} = \{\,{x\in P}\mid{x_{0} \nleq_{P} x \land\dots\land
x_{k} \nleq_{P} x}\,\}.
\]
Notice that if $P$ is computable, so is $P_{x_{0}, \dots, x_{k}}$.
\end{definition}

\begin{obs}
\label{dc} If $P$ is a computable wpo then every downward closed subset
$D\subseteq P$ is computable. In fact $P$ wpo clearly implies the existence of
a finite set of minimal elements $\{x_{0},\dots,x_{k}\}$ in $P\setminus D $
while then $D=P_{x_{0},\dots,x_{k}}$ which is computable.
\end{obs}

We will use $\alpha$-generic and hyperarithmetically generic for Cohen forcing
(i.e. conditions are finite binary strings), as defined in detail in
\cite[\S IV.3]{Sacks}.

\begin{definition}
For $\alpha<\omega_{1}^{\mathrm{CK}}$ (i.e. $\alpha$ a computable ordinal), a
set $G$ is $\alpha$-generic if the conditions which are initial segments of
$G$ suffice to decide all $\Sigma_{\alpha}$-questions. $G$ is
hyperarithmetically generic if it is $\alpha$-generic for every $\alpha
<\omega_{1}^{\mathrm{CK}}$.
\end{definition}

We associate to an infinite set a function in the usual way, described by the
next definition. We will use this function when $G$ is generic.

\begin{definition}
If $G \subseteq\mathbb{N} $ is infinite let $f_{G}: \omega\to\omega$ be
defined by letting $f_{G}(n)$ be the number of 0's between the $n$th 1 and the
$(n+1)st$ 1 in (the characteristic function of) $G$.
\end{definition}

\section{Highly noncomputable maximal and strongly maximal chains}

We will use the following result of Ash and Knight (\cite[Example 2]{AK}):

\begin{theorem}
\label{AK} Let $\alpha<\omega_{1}^{\mathrm{CK}}$. If $A$ is a $\boldsymbol{\Pi
}_{2\alpha+1}^{0}$ set then there exists a uniformly computable sequence of
linear orders $L_{n}^{A}$ such that $L_{n}^{A}\cong\omega^{\alpha} $ for all
$n\in A$ and $L_{n}^{A}\cong\omega^{\alpha+1}$ for all $n\notin A$. Indeed,
this sequence of linear orderings can be computed uniformly in indices for
$\alpha$ as a computable ordinal and $A$ as a $\boldsymbol{\Pi}_{2\alpha
+1}^{0}$ set.
\end{theorem}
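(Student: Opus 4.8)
The plan is to follow the standard Ash--Knight construction for coding a $\boldsymbol{\Pi}^0_{2\alpha+1}$ set into a uniformly computable sequence of linear orderings, and the natural route is a simultaneous induction on the computable ordinal $\alpha$. First I would observe that the base-type building blocks are already available: for $\alpha$ a successor one wants, from a uniformly computable sequence of orderings $M_n$ with $M_n\cong\om^{\beta}$ or $\om^{\beta+1}$ according to membership in some $\boldsymbol{\Pi}^0_{2\beta+1}$ predicate, to pass to orderings of type $\om^{\alpha}$ or $\om^{\alpha+1}$ that answer a $\boldsymbol{\Pi}^0_{2\alpha+1}=\boldsymbol{\Pi}^0_{2\beta+3}$ question; and for $\alpha$ a limit, take effective sums along the $\om$-sequence of ordinals $\beta<\al$ witnessing $\al=\sup\beta_k$. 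The heart of the argument is the successor step, where one needs a ``limit'' mechanism (in the sense of the $\Delta^0_2$/Ash metatheorem machinery) that builds a linear order whose final order type toggles between $\om^{\beta+1}$ and $\om^{\beta+2}$ depending on a $\boldsymbol{\Sigma}^0_{2\beta+2}$ (equivalently $\exists$ over $\boldsymbol{\Pi}^0_{2\beta+1}$) event, and then closes it off under the outer universal quantifier to reach $\boldsymbol{\Pi}^0_{2\beta+3}$.

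Concretely, for the successor step I would write the given $\boldsymbol{\Pi}^0_{2\alpha+1}$ set $A$ as $n\in A \iff \forall m\, R(n,m)$ where $R$ is $\boldsymbol{\Sigma}^0_{2\alpha}$, hence of the form $\exists k\, S(n,m,k)$ with $S\in\boldsymbol{\Pi}^0_{2\alpha-1}$ (or, when $\alpha$ is a limit, directly a $\boldsymbol{\Sigma}^0_{2\alpha}$ matrix handled by the limit case). By the induction hypothesis applied to a $\boldsymbol{\Pi}^0_{2\beta+1}$ reformulation of $S$, I get uniformly computable linear orders that are $\om^{\beta}$ or $\om^{\beta+1}$; stacking $\om$-many of these in sequence and taking the appropriate ``$+1$ if all blocks are large, stay small if some block is small'' construction yields, for each pair $(n,m)$, a computable linear order $N_{n,m}$ of type $\om^{\alpha}$ if $R(n,m)$ holds and $\om^{\alpha+1}$ if it fails. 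Finally I form $L_n^A = \sum_{m} N_{n,m}$ (the ordered sum over $m\in\om$): if $n\in A$ every summand has type $\om^{\alpha}$ and the sum has type $\om^{\alpha}\cdot\om=\om^{\alpha+1}$ --- wait, that is the wrong target, so instead one arranges the summands in the reverse ($\om^*$) shape or uses the standard trick of making the ``generic'' (all-true) outcome collapse, so that the sum telescopes to $\om^{\alpha}$, while a single failing summand $N_{n,m}\cong\om^{\alpha+1}$ dominates and forces type $\om^{\alpha+1}$. Getting these order-type bookkeeping details to come out exactly right --- and uniformly, with all indices computed from an index for $\alpha$ and for $A$ --- is where the real care is needed; this is precisely the computation carried out in \cite{AK}, so in the write-up I would cite Example 2 there and only indicate the inductive skeleton.

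The main obstacle, then, is not conceptual novelty but the precise arithmetic of ordinal exponentiation under effective sums: ensuring that the ``all good'' case produces $\om^{\alpha}$ rather than $\om^{\alpha}\cdot k$ or $\om^{\alpha+1}$, that a single ``bad'' block genuinely pushes the type up to $\om^{\alpha+1}$ and no higher, and that the limit-stage sums $\sum_k \om^{\beta_k}$ with $\beta_k\nearrow\al$ reassemble to $\om^{\al}$ exactly. One must also track that each transition raises the quantifier complexity by exactly two levels ($\boldsymbol{\Pi}^0_{2\beta+1}\to\boldsymbol{\Pi}^0_{2\beta+3}$) so the indexing by $2\alpha+1$ is maintained through both successor and limit stages, and verify the uniformity claim by checking that every appeal to the induction hypothesis is made through a computable transformation of indices. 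Since all of this is exactly the content of \cite[Example 2]{AK}, the honest proof is a reference together with the remark that the cited construction is uniform in the stated indices; I would present it that way rather than reproducing the full metatheorem-driven argument.
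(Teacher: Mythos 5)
The paper does not prove this statement at all: it is quoted as a result of Ash and Knight, and the ``proof'' consists exactly of the citation to \cite[Example 2]{AK} together with the remark that the construction is uniform in indices for $\alpha$ and for $A$. So your bottom line --- give the reference and note the uniformity --- is precisely what the paper does, and as a write-up plan it is fine. One caution, though, about the inductive sketch you include: as written it does not constitute an outline of a proof. In the successor step the naive ordered sum comes out wrong (as you notice, $\omega$-many blocks of type $\omega^{\alpha}$ sum to $\omega^{\alpha+1}$, while membership in the $\boldsymbol{\Pi}^{0}_{2\alpha+1}$ set must yield the \emph{smaller} type $\omega^{\alpha}$), and the proposed repairs do not work: an $\omega^{*}$-shaped arrangement of nontrivial well-ordered blocks is not a well-order at all, and the ``collapsing the all-true outcome'' trick is left unspecified --- this is exactly the point where the actual Ash--Knight argument abandons simple effective sums and instead invokes their metatheorem on $\alpha$-systems, using the computation that $\omega^{\alpha}$ and $\omega^{\alpha+1}$ stand in the appropriate back-and-forth relation at level $2\alpha+1$. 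There is also some glossed bookkeeping in passing from the $\boldsymbol{\Sigma}^{0}_{2\alpha}$ matrix $R$ to instances of the inductive hypothesis. Since you ultimately defer to the citation, none of this undermines your proposed presentation, but the sketch should not be offered as a substitute for, or summary of, the cited construction.
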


Our first results concerns strongly maximal chains in computable wpos and
shows that they indeed must be of arbitrarily high complexity in the
hyperarithmetical hierarchy.

\begin{theorem}
\label{smax} Let $\alpha<\omega_{1}^{\mathrm{CK}}$. There exists a computable
wpo $P$ such that any strongly maximal chain in $P$ computes $0^{(\alpha)}$.
\end{theorem}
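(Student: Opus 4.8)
The plan is to build $P$ by a careful amalgamation of the linear orders $L_n^A$ from Theorem~\ref{AK}, so that the position in a strongly maximal chain encodes membership in a suitable $\boldsymbol\Pi^0_{2\beta+1}$ set coding $\z{\al}$. Fix a $\boldsymbol\Pi^0_{2\beta+1}$ index for a set $A$ computably equivalent to $\z{\al}$ (choosing $\beta$ so that $2\beta+1$ is above $\al$ in the relevant sense; by the last sentence of Theorem~\ref{AK} the sequence $L_n^A$ is then uniformly computable). The key point I want to exploit is that a strongly maximal chain must, for \emph{every} ordinal $\gamma<\hh(P)$, contain the unique element of height $\gamma$; so if I can arrange that an element $x$ lies at a height that differs depending on whether $n\in A$, then any strongly maximal chain reveals the value of $A(n)$, and uniformly in $n$ this computes $\z{\al}$.

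First I would set up the building blocks. For each $n$, take the computable linear order $L_n^A$, which is $\om^\al$ if $n\in A$ and $\om^{\al+1}$ if $n\notin A$ (after reindexing $\al,\beta$ appropriately; I will use $\om^{\al}$ versus $\om^{\al+1}$ for concreteness). Stack on top of $L_n^A$ a fixed copy of $\om^{\al+1}$, call the whole thing $Q_n$; then $\ot(Q_n)=\om^{\al+1}$ in both cases, but the "seam" element $s_n$ where the appended $\om^{\al+1}$ begins sits at height $\om^\al$ when $n\in A$ and at height $\om^{\al+1}$ when $n\notin A$ — wait, that collapses, so instead I append a copy of $1+\om^{\al+1}$ below a single marked top point, or more simply I use the two summands $\om^{\al}\cdot 2$ vs. $\om^{\al+1}$ which have the same order type $\om^{\al+1}$ only when $\al$ is additively decomposable; to avoid that subtlety I instead compare $\om^\al\cdot(n+2)$-type padding, or — cleanest — I keep $L_n^A$ as the bottom part and record the height of its \emph{top} marked element $t_n$, which is $\om^\al$ or $\om^{\al+1}$ according to $A(n)$, and these two ordinals are always distinct. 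Then I form $P$ as the disjoint union $\bigsqcup_n (L_n^A\cup\{t_n\})$ together with, for each $n$, a fresh maximal element above everything in that $n$th block, and I check directly from the characterization $\hh_P(x)=\sup\{\hh_P(y)+1\mid y<_P x\}$ that $\hh_P(t_n)=\ot(L_n^A)\in\{\om^\al,\om^{\al+1}\}$. Since the blocks are pairwise incomparable but each is a well-order, $P$ is a wpo, and it is computable because the $L_n^A$ are uniformly computable.

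Next I would verify the encoding. A strongly maximal chain $C$ in $P$ must contain, for every $\gamma<\hh(P)$, the unique element of height $\gamma$. In particular, from $C$ I can (uniformly) search for the element of $C$ in the $n$th block that is $\leq_P t_n$ and has no successor in $C$ within that block below $t_n$'s appended top; examining which ordinal its height realizes — equivalently, whether $C\cap (n\text{th block})$ has order type $\om^\al$ or $\om^{\al+1}$ below $t_n$ — tells me whether $n\in A$. This decision is $C$-computable uniformly in $n$ provided I can, $C$-computably, distinguish order type $\om^\al$ from $\om^{\al+1}$ inside the given computable copy; this is where I use that $C$ is an actual subset of $P$ (not merely an abstract order type), so I can read off from $C$ a cofinal map or a limit-point structure that decides the question. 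Hence $C\geq_T A\equiv_T \z{\al}$.

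The main obstacle I anticipate is the bookkeeping in the previous paragraph: making the "which order type does $C$ realize in block $n$" question genuinely decidable from $C$ alone, uniformly in $n$, and doing so without accidentally making $\hh(P)$ too large (so that the heights $\om^\al$ and $\om^{\al+1}$ I want to read off are actually below $\hh(P)$ and hence genuinely pinned down by strong maximality) — this forces the choice of the appended top elements and the fresh maximal elements to be done with care so that each relevant $\gamma$ is the height of \emph{exactly one} point of $P$. A secondary nuisance is matching the parity: Theorem~\ref{AK} gives $\boldsymbol\Pi^0_{2\al+1}$ sets at level $\om^\al$ vs. $\om^{\al+1}$, and $\z{\al}$ is $\boldsymbol\Sigma^0_\al$ (or $\boldsymbol\Pi^0_\al$), so I will pick $\beta$ with $\z{\al}\leq_T$ some $\boldsymbol\Pi^0_{2\beta+1}$ set and note that computing that set suffices since it computes $\z{\al}$; the uniformity clause of Theorem~\ref{AK} keeps everything effective.
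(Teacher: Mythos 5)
Your construction has two genuine gaps, and the second one is exactly the obstacle you flagged but did not resolve. First, you build $P$ as a \emph{disjoint union} of pairwise incomparable blocks $\bigsqcup_n (L_n^A\cup\{t_n\})$. But a chain in such a $P$ lies entirely inside a single block, so a strongly maximal chain can only witness the order type of one block and carries no information about $A(n)$ for the other $n$'s; the encoding collapses. Relatedly, your key claim that a strongly maximal chain must contain ``the \emph{unique} element of height $\gamma$'' for every $\gamma<\operatorname{ht}(P)$ misreads the definition: the uniqueness is of the element \emph{within the chain} $C$, not in $P$ (in your disjoint union every block has an element of height $0$, say). To force the chain to visit every block you must stack the blocks vertically, as the paper does: it puts $a_n <_P b_n^i <_P a_{n+1}$ with two incomparable chains between consecutive $a_n$'s, so that $\operatorname{ht}_P(a_n)=\omega^{\alpha+1}\cdot n$, $\operatorname{ht}(P)=\omega^{\alpha+2}$, and realizing all heights below $\omega^{\alpha+2}$ forces the strongly maximal chain through the longer of the two chains in every block.

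Second, even granting a correct global structure, your decoding step --- deciding from $C$ whether $C$ meets block $n$ in a chain of type $\omega^\alpha$ or $\omega^{\alpha+1}$ --- is not a $C$-computable question; recognizing the order type of a $C$-computable well-order is far beyond $C$ in general, and ``reading off a cofinal map or limit-point structure'' does not make it decidable. The paper sidesteps this entirely by applying Theorem \ref{AK} to \emph{both} $A=0^{(\alpha)}$ and its complement $\bar A$, putting $L_n^A$ above $b_n^0$ and $L_n^{\bar A}$ above $b_n^1$: exactly one of the two chains in each block has length $\omega^{\alpha+1}$, the strongly maximal chain is forced through that one, and then $n\in 0^{(\alpha)}$ iff $b_n^1\in C$ --- a single membership query, trivially computable from $C$. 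Without some device of this kind (complementary pairs, or another way to turn the length comparison into a membership fact about finitely many designated elements), your argument does not yield $C\geq_T 0^{(\alpha)}$.
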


\begin{proof}
Let $P$ include elements $\{\,{a_{n}}\mid{n \in\mathbb{N} }\,\}$ and
$\{\,{b_{n}^{i}}\mid{n \in\mathbb{N} , i<2}\,\}$. The partial order on these
elements is given by $a_{n} <_{P} b_{n}^{i} <_{P} a_{n+1}$ and $b_{n}^{0}
\,|_{P}\, b_{n}^{1}$.

Let $A = 0^{(\alpha)}$: since $A$ is $\boldsymbol{\Sigma}^{0}_{\alpha}$\ it is
also $\boldsymbol{\Delta}^{0}_{2\alpha+1}$ and we can apply Theorem \ref{AK}
both to $A$ and to its complement $\bar{A}$. The order $P_{(b_{n}^{0},
a_{n+1})}$ consists of $L^{A}_{n}$, while $P_{(b_{n}^{1}, a_{n+1})}$ consists
of $L^{\bar{A}}_{n}$. (Therefore all elements of one chain are incomparable
with the elements of the other chain.) This completes the definition of $P$.

Notice that for every $n$ there are exactly two disjoint chains maximal with
respect to inclusion in $P_{(a_{n}, a_{n+1})}$: one of them has length
$\omega^{\alpha+1}$, whiled the other has length $\omega^{\alpha}$. Hence
$\operatorname{ht}_{P} (a_{n})= \omega^{\alpha+1} \cdot n$ for every $n$ and
$\operatorname{ht}(P) = \omega^{\alpha+2}$. Therefore there exists only one
strongly maximal chain in $P$: the one that goes through all chains of length
$\omega^{\alpha+1}$.

Thus if $C$ is a strongly maximal chain in $P$ we have $0^{(\alpha)} =
\{\,{n}\mid{b_{n}^{1} \in C}\,\}$.
\end{proof}

Our second result shows that maximal chains can also be highly noncomputable.
In contrast to Theorem \ref{smax}, however, we do not show that they must lie
arbitrarily high up in the hyperarithmetic hierarchy. Indeed, Theorem
\ref{gen} shows that this is not the case.

\begin{theorem}
\label{maxnot} Let $\alpha<\omega_{1}^{\mathrm{CK}}$. There exists a
computable wpo $P$ such that $0^{(\alpha)}$ does not compute any maximal chain
in $P$.
\end{theorem}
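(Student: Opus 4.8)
The plan is to build a computable wpo $P$ that "hides" the value of $0^{(\alpha)}$ inside choices that no maximal chain is forced to reveal, while still being complex enough that $0^{(\alpha)}$ cannot straighten out the ambiguity on its own. The natural template is the construction in Theorem \ref{smax}: use a "spine" $a_0 <_P a_1 <_P \cdots$ with, between consecutive $a_n$ and $a_{n+1}$, two incomparable chains whose lengths depend on whether $n \in 0^{(\alpha)}$. The key difference is that now I want \emph{both} chains between $a_n$ and $a_{n+1}$ to have the \emph{same} order type (say $\omega^{\alpha+1}$), so that $\hh(P)=\omega^{\alpha+2}$ is still correct, but a maximal chain only has to pick \emph{one} of the two chains at each level $n$ — and it does not matter which. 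Thus there are continuum-many maximal chains, and the content of $0^{(\alpha)}$ is not recoverable from any single one. But this alone is not enough: I must prevent $0^{(\alpha)}$ itself from being able to \emph{produce} such a chain, which means I cannot let $0^{(\alpha)}$ uniformly compute, for each $n$, some index into one of the two chains of length $\omega^{\alpha+1}$.

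The mechanism to block that is to make the two chains between $a_n$ and $a_{n+1}$ \emph{isomorphic to $L_n^A$ and $L_n^{\bar A}$ as in Theorem \ref{AK}} where $A = 0^{(\alpha+1)}$ (or some set at the right level above $0^{(\alpha)}$), so that for each $n$ \emph{exactly one} of the two chains has order type $\omega^{\alpha+1}$ and the other has order type $\omega^{\alpha}$, \emph{but which one is which is governed by a set that $0^{(\alpha)}$ cannot compute}. Then $\hh_P(a_n) = \omega^{\alpha+1}\cdot n$ and $\hh(P) = \omega^{\alpha+2}$ exactly as before, so a maximal chain must pass through the $\omega^{\alpha+1}$-chain at every level — and hence, just as in the proof of Theorem \ref{smax}, from any maximal chain $C$ one reads off $\{\,n \mid b_n^1 \in C\,\} = 0^{(\alpha+1)}$, which by the jump theorem does not reduce to $0^{(\alpha)}$. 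The point is that here the maximality and strong maximality notions coincide for this particular $P$ (there is essentially a unique maximal chain up to finite modification), so the strength is inherited. To get $A = 0^{(\alpha+1)}$ into the form required by Theorem \ref{AK}, note $0^{(\alpha+1)}$ is $\boldsymbol{\Sigma}^0_{\alpha+1}$, hence $\boldsymbol{\Delta}^0_{2\alpha+3} = \boldsymbol{\Delta}^0_{2(\alpha+1)+1}$, so Theorem \ref{AK} applies to it and its complement with parameter $\alpha+1$ — wait, I need the chain lengths to be $\omega^{\alpha}$ versus $\omega^{\alpha+1}$, so I apply Theorem \ref{AK} with the ordinal parameter $\alpha$ to a $\boldsymbol{\Delta}^0_{2\alpha+1}$ coding set; I should instead take $A$ to be any fixed $\boldsymbol{\Delta}^0_{2\alpha+1}$ set that does not compute $0^{(\alpha)}$ and \emph{is not computed by} $0^{(\alpha)}$ — for instance a set $1$-generic relative to nothing, or more simply a set Turing-incomparable with $0^{(\alpha)}$ that is still $\Delta^0_{2\alpha+1}$, e.g. a suitable $2\alpha$-generic. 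Then $\{\,n \mid b_n^1 \in C\,\}$ computes $A$, and $A \not\le_T 0^{(\alpha)}$, giving the conclusion.

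So the construction is: $P$ has a spine $\{a_n\}$ with $a_n <_P b_n^i <_P a_{n+1}$ and $b_n^0 \inc b_n^1$; between $b_n^0$ and $a_{n+1}$ sits $L_n^A$ and between $b_n^1$ and $a_{n+1}$ sits $L_n^{\bar A}$, where $A$ is a fixed $\boldsymbol{\Delta}^0_{2\alpha+1}$ set with $A \not\le_T 0^{(\alpha)}$; elements of distinct "cells" $P_{(a_n,a_{n+1})}$ and $P_{(a_m,a_{m+1})}$ are incomparable, and $b_n^0$-elements are incomparable with $b_n^1$-elements. One checks $P$ is a computable wpo (it's well-founded since each cell is, there are no infinite antichains since each antichain meets only finitely many cells and each $L_n$ is a chain), that $\hh(P) = \omega^{\alpha+2}$, and that any maximal chain $C$ must, for cofinally many $n$ — indeed all $n$ — contain the top of the longer of the two chains in cell $n$, from which $A$ is computable. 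The main obstacle is the bookkeeping to show that \emph{every} maximal chain (not just the strongly maximal one) is forced through the long side of every cell: I need that a chain that "stalls" by taking the short ($\omega^\alpha$) side in cell $n$ can only achieve order type $< \omega^{\alpha+2}$, so it is not maximal. This is where I must be careful that the chains in different cells really are mutually incomparable and that nothing lets a chain "skip" a cell or combine pieces from the two incomparable sides — i.e., the order type of any chain in $P$ is $\sum_{n} (\text{order type of a chain through cell } n) $ bounded cell-by-cell by $\omega^{\alpha+1}$, with equality in cell $n$ forcing the long side, so the supremum $\omega^{\alpha+2}$ is attained only by chains that take the long side in every cell.
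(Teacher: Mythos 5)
There is a genuine gap, and it is fatal to the construction as proposed. Your argument hinges on the claim that a maximal chain must take the long ($\omega^{\alpha+1}$) side in \emph{every} cell, so that ``a chain that stalls by taking the short side in cell $n$ can only achieve order type $<\omega^{\alpha+2}$.'' That claim is false: since $\omega^{\alpha+1}\cdot\omega=\omega^{\alpha+2}$, a chain attains order type $\omega^{\alpha+2}$ as soon as it takes the long side in \emph{infinitely many} cells, no matter what it does (short side, or nothing) in the others. With only two sides per cell governed by a single set $A$, this kills the construction: take the chain consisting of all $a_n$, all $b_n^0$, and the entire linear order between $b_n^0$ and $a_{n+1}$ for every $n$. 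Since $A$ must be coinfinite (it is Turing incomparable with $0^{(\alpha)}$), side $0$ is the long side for infinitely many $n$, so this chain has order type $\omega^{\alpha+2}$ and is maximal; and it is plainly \emph{computable} (it is just ``the spine plus everything on side $0$''). So your $P$ has a computable maximal chain and cannot witness the theorem. More fundamentally, the strategy of forcing every maximal chain to compute a fixed noncomputable hyperarithmetic set $A$ cannot succeed for any computable wpo: by Theorem \ref{gen} a hyperarithmetically generic $G$ computes some maximal chain in $P$, yet such a $G$ computes no noncomputable hyperarithmetic set. Also note that a chain which is maximal in order type need not be strongly maximal, so maximality and strong maximality do not ``coincide'' for your $P$ as you assert.

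The paper's proof avoids coding altogether and diagonalizes instead: cell $n$ contains $n+1$ pairwise incomparable chains $b_n^i$ ($i\le n$), and the length of the chain above $b_n^i$ is governed by $A_i=\{\,n\mid\exists e<n\,\Phi_e^{0^{(\alpha)}}(n)=i\,\}$, which is $\boldsymbol{\Sigma}^0_{\alpha+1}\subseteq\boldsymbol{\Pi}^0_{2\alpha+1}$, so Theorem \ref{AK} applies: the side-$i$ chain is short ($\omega^{\alpha}$) when $n\in A_i$ and long ($\omega^{\alpha+1}$) when $n\notin A_i$. Since $n$ lies in at most $n$ of the sets $A_0,\dots,A_n$, every cell has a long side, so $\operatorname{ht}(P)=\omega^{\alpha+2}$ and every maximal chain takes a long side in infinitely many cells. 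If a maximal chain $C$ were computable from $0^{(\alpha)}$, the partial function $\psi(n)=$ ``the side $C$ enters in cell $n$'' would be a partial $0^{(\alpha)}$-computable function, say $\Phi_e^{0^{(\alpha)}}$; but at any $n>e$ where $C$ takes the long side we have $n\notin A_{\psi(n)}$, whereas $\Phi_e^{0^{(\alpha)}}(n)=\psi(n)$ with $e<n$ would put $n\in A_{\psi(n)}$ --- a contradiction. This is exactly the mechanism your two-sided construction is missing: the growing number of sides and the self-referential choice of the $A_i$ defeat every $0^{(\alpha)}$-computable selection, without requiring the (false, and impossible) claim that maximal chains must make the correct selection everywhere.
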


\begin{proof}
We can assume $\alpha$ is a successor ordinal, so that $\alpha+1 \leq2\alpha$.

Let $P$ include elements $\{\,{a_{n}}\mid{n \in\mathbb{N} }\,\}$ and
$\{\,{b_{n}^{i}}\mid{n \in\mathbb{N} , i \leq n}\,\}$. The partial order on
these elements is given by $a_{n} <_{P} b_{n}^{i} <_{P} a_{n+1}$ and
$b_{n}^{i} \,|_{P}\, b_{n}^{j}$ for $i \neq j$.

For every $i$ let $A_{i} = \{\,{n}\mid{\exists e<n\, \Phi_{e}^{0^{(\alpha)}}
(n) =i}\,\}$ which is $\boldsymbol{\Sigma}^{0}_{\alpha+1}$ and hence
$\boldsymbol{\Pi}^{0}_{2\alpha+1}$: we can thus apply Theorem \ref{AK} to
$A_{i} $. The order $P_{(b_{n}^{i}, a_{n+1})}$ consists of $L^{A_{i}}_{n}$.
(Therefore again all elements of one chain are incomparable with the elements
of the other chains.) This completes the definition of $P$.

Notice that for every $n$ there are exactly $n+1$ chains maximal with respect
to inclusion in $P_{(a_{n}, a_{n+1})}$, and these are pairwise disjoint. Since
$n$ belongs to $A_{i}$ for at most $n$ different $i$'s, at least one of these
chains has length $\omega^{\alpha+1}$, while the shorter chains have length
$\omega^{\alpha}$. Hence $\operatorname{ht}_{P} (a_{n})= \omega^{\alpha+1}
\cdot n$ for every $n$ and $\operatorname{ht}(P) = \omega^{\alpha+2}$.
Therefore every maximal chain in $P$ goes through infinitely many chains of
length $\omega^{\alpha+1}$.

If $C$ is a maximal chain in $P$ define a partial function $\psi\leq_{T} C$ by
setting
\[
\psi(n) =
\begin{cases}
i & \text{if $\exists x \in C\, b_{n}^{i} \leq_{P} x <_{P} a_{n+1}$;}\\
\uparrow & \text{otherwise.}%
\end{cases}
\]
Notice that $\psi$ is well defined because if $x, x^{\prime}\in C$ are such
that $b_{n}^{i} \leq_{P} x <_{P} a_{n+1}$ and $b_{n}^{j} \leq_{P} x^{\prime
}<_{P} a_{n+1}$ the comparability of $x$ and $x^{\prime}$ implies $i=j$.

We now show that $\psi\neq\Phi_{e}^{0^{(\alpha)}}$ for every $e$, thereby
establishing that $C \nleq_{T} 0^{(\alpha)}$. Fix $e$. There exists $n>e$ such
that $C$ intersects $P_{(a_{n}, a_{n+1})}$ in a chain of length $\omega
^{\alpha+1}$. Thus $\psi(n)$ is defined and $n \notin A_{\psi(n)}$. In
particular $\Phi_{e}^{0^{(\alpha)}} (n) \neq\psi(n)$ and thus $\psi\neq
\Phi_{e}^{0^{(\alpha)}}$.
\end{proof}

\section{Maximal chains do not code}


In this section we prove that maximal chains in wpos can be computed from
generic sets. Here is the precise statement of our result.

\begin{theorem}
\label{gen} If $P$ is a computable wpo and $G$ a hyperarithmetically generic
set then $C \leq_{T} G$ for some maximal chain $C$ in $P$. Furthermore, if $P$
has a maximal chain of length $<\omega^{\alpha+1}$, then $2 \cdot\alpha
$-genericity of $G$ suffices.
\end{theorem}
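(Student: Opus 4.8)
The plan is to build the maximal chain $C$ by recursion, one ``block'' $P_{(a_n,a_{n+1})}$ at a time in the style of the wpos from the previous section, but now for an \emph{arbitrary} computable wpo $P$. The key structural fact is Observation~\ref{dc}: downward closed subsets of a computable wpo are computable, uniformly once we are told the finitely many minimal elements of the complement. So I would try to specify $C$ as an increasing union $C = \bigcup_s C_s$ of finite chains, where at stage $s$ we have committed to a finite chain $C_s$ with top element $x_s$, together with a \emph{promise} that the rest of $C$ lies in $P_{[x_s,\infty)}$ and realizes the height $\hh_P(x_s)$ as the order type of the tail. The point of strong maximality in Wolk's theorem is exactly that such a chain exists through every element realizing its own height; here we only need maximality, so we have more room.

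The heart of the argument is: at stage $s$, given $x_s$, we must choose the next element $x_{s+1} >_P x_s$ so that $\hh_P(x_{s+1})$ is as large as possible — ideally so that the chain can still reach $\hh(P)$. Whether a given candidate $y >_P x_s$ has $\hh_P(y) \geq \beta$ is in general a $\Pi^1_1$-type question (it asks about existence of long chains), and this is where genericity enters. The idea is that a hyperarithmetically generic $G$ can, via the associated function $f_G$, make the ``guesses'' that an oracle for the relevant $\Sigma_\alpha$ facts would make, and genericity guarantees these guesses are correct infinitely often / eventually in the right way. More precisely: I would set up a computable tree of finite approximations to $C$, where a node is a finite chain together with finitely many minimal elements excluded (so the ``current downward closed set'' is computable), and argue that the set of conditions forcing ``$C$ extends to a maximal chain'' is dense below the appropriate level; then $f_G$ picks a path through this tree, and generic meeting of these dense sets yields a genuine maximal chain. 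The bound on quantifier complexity — that a maximal chain of length $<\omega^{\alpha+1}$ needs only $2\alpha$-genericity — should come from computing the height ranks $\hh_P(x)$: a chain has order type $<\omega^{\alpha+1}$ iff it is built from blocks of type $<\omega^\alpha$, and deciding ``$\hh_P(y) < \omega^\alpha$ vs.\ $\geq \omega^\alpha$'' for elements $y$ is (by the Ash–Knight-style analysis and the usual rank computations) a $\Sigma_{2\alpha}$ or $\Pi_{2\alpha}$ question, matching the $2\cdot\alpha$ in the statement.

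Concretely the steps are: (1) reduce to the case where $P$ has the block structure $a_0 <_P \cdots <_P a_n <_P \cdots$ with the maximal chain forced to pass through each $a_n$ — or, if $P$ is not of this form, first extract a cofinal $\omega$-chain $a_0 <_P a_1 <_P \cdots$ with $\hh_P(a_n)$ strictly increasing and cofinal in $\hh(P)$ (possible since $\hh(P) = \sup_x(\hh_P(x)+1)$ and, for a wpo, any countable partial order has such a sequence if $\hh(P)$ has cofinality $\omega$; the case $\hh(P)$ a successor is easy and handled separately); (2) within each block $P_{(a_n,a_{n+1})}$, the remaining task is to choose a maximal-with-respect-to-inclusion chain of maximal order type, which by Wolk exists, and to do this computably-in-$G$ by having $f_G$ enumerate candidate extensions and genericity ensure correctness; (3) verify that the resulting $C$ is computable from $G$ (using Observation~\ref{dc} at each finite stage to keep the working sets computable) and has order type $\hh(P)$, i.e.\ is maximal; (4) track quantifier complexity through (2)–(3) to get the $2\cdot\alpha$ refinement.

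The main obstacle I expect is step~(2): making the ``choose a chain of maximal order type inside a block'' construction genuinely \emph{effective relative to $G$}, rather than just existential. Wolk's proof of Theorem~\ref{Wolk} is nonconstructive (it picks elements realizing their height rank, which is a hyperarithmetic choice), so I cannot simply relativize it. The trick has to be that we do \emph{not} need the optimal chain in each block — only enough blocks of near-optimal length, matching what was done in Theorems~\ref{smax} and~\ref{maxnot} where finitely many ``short'' blocks were harmless. So the real content is a lemma saying: for a computable wpo $Q$ with a block structure and height $\omega^{\alpha+1}$, a $2\alpha$-generic set computes a cofinal sequence of blocks each contributing order type $\omega^\alpha$ (equivalently, of length $\omega^\alpha$), and stringing these together gives order type $\omega^{\alpha+1}$. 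Proving \emph{that} lemma — identifying the right forcing language, the right dense sets, and checking the density computations live at level $2\alpha$ — is where the work is, and it is essentially a generalization of the combinatorics already present in Section~3 read ``in reverse.''
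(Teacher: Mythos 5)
You have the right architecture in outline -- a block-by-block construction guided by $f_G$, the realization that only infinitely many ``near-optimal'' blocks are needed, and the target complexity $2\cdot\alpha$ -- but the proposal has a genuine gap exactly where you locate ``the real content,'' and the way you frame that content points in a direction that cannot work as stated. You propose to have the generic decide questions about height ranks, ``$\hh_P(y)\geq\beta$,'' which you first describe as $\Pi^1_1$-type; but hyperarithmetic genericity only decides $\Sigma_\beta$ questions for computable $\beta$, so if the dense sets were genuinely $\Pi^1_1$ the whole strategy would collapse. Your later assertion that these rank questions are in fact $\Sigma_{2\alpha}/\Pi_{2\alpha}$ is not justified by ``Ash--Knight-style analysis and the usual rank computations'' (Theorem~\ref{AK} is a coding result, and rank comparison for well-founded relations is properly $\Pi^1_1$ in general); establishing such a bound for wpos requires an argument, and that argument is essentially the missing lemma itself. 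So the proposal defers the theorem to an unproved statement whose proof is the actual work.

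What the paper does, and what your outline lacks, are two structural moves that make every question bounded-complexity and the construction inductive. First, using Theorem~\ref{Wolk} and the Cantor normal form of $\hh(P)$, and then replacing $P$ by the downward closure $I$ of some maximal chain -- which is computable by Observation~\ref{dc} -- one reduces to computable wpos of height $\omega^{\alpha}$ that possess a \emph{cofinal} chain of type $\omega^{\alpha}$; this converts ``maximal'' into ``cofinal,'' and cofinality is the notion with a low-level inductive characterization (an ideal with no maximal elements for $\omega$; a $\forall\exists$ statement over suborders $P_{\bar b}\cap P_{[a,a')}$ for $\omega^{\alpha}$, giving $\Pi_{2\cdot\alpha}$). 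Height-rank questions about arbitrary elements never need to be asked. Second, one defines by effective transfinite recursion operators $\Phi_{\alpha}(P,G)$ producing a chain of type at most $\omega^{\alpha}$ always, and of type exactly $\omega^{\alpha}$ whenever $P$ has a cofinal $\omega^{\alpha}$-chain ($\Phi_1$ is a greedy construction needing no genericity); at stage $\alpha$ the generic chooses $a_i<_P a_{i+1}$ and tuples $\bar b_i$, the blocks are $P_{\bar b_i}\cap P_{[a_i,a_{i+1})}$, and density of the ``good'' choices (block has a cofinal $\omega^{\alpha_i}$-chain) follows directly from the existence of the cofinal $\omega^{\alpha}$-chain, since any $x$ is below a whole $\omega^{\alpha_i}$-segment of it whose downward closure inside the interval has the form $P_{\bar b}\cap P_{[a,a')}$. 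The finitely-many-excluded-elements device you mention is indeed the $P_{\bar b}$ trick, but without the reduction to cofinal chains and the inductive $\Pi_{2\cdot\alpha}$ characterization, your step (2) remains exactly the open core of the theorem rather than a consequence of the setup.
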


Theorem \ref{gen} is proved in several steps. First we make some observations
that allow us to restrict our attention to computable wpos $P$ such that for
some $\alpha$, $\operatorname{ht}(P) = \omega^{\alpha}$ and $P$ has a cofinal
chain of order type $\omega^{\alpha}$. Then, under these hypothesis, we first
deal with the cases $\alpha=1$ and $\alpha=2$. Eventually, generalizing the
ideas used in the simplest cases, we prove the theorem for every $\alpha$.

\subsection{Reducing to wpos with special properties}

\label{reduction} Let $P$ be a computable wpo with $\operatorname{ht}(P) =
\gamma= \omega^{\alpha_{0}}+ \dots+\omega^{\alpha_{k}}$ with $\alpha_{0}
\geq\alpha_{1} \geq\dots\geq\alpha_{k}$. By Theorem \ref{Wolk} let
$\{\,{x_{\beta}}\mid{\beta<\gamma}\,\}$ be a strongly maximal chain in $P$
with $\operatorname{ht}_{P}(x_{\beta}) = \beta$ for every $\beta<\gamma$. For
every $i \leq k$ let $\gamma_{i} = \sum_{j<i} \omega^{\alpha_{j}}$ and $a_{i}
= x_{\gamma_{i}}$, while $a_{k+1}=\infty$. Then for all $i\leq k$ we have that
$P_{[a_{i},a_{i+1})}$ is a computable wpo and $\operatorname{ht}%
(P_{[a_{i},a_{i+1})}) = \omega^{\alpha_{i}}$. Moreover if $C_{i}$ is a maximal
chain in $P_{[a_{i},a_{i+1})}$ for every $i\leq k$ then $\bigcup_{i \leq k}
C_{i}$ is a maximal chain in $P$. Obviously, if $C_{i} \leq_{T} G$ for every
$i$ then $\bigcup_{i \leq k} C_{i} \leq_{T} G$.

This shows that to prove Theorem \ref{gen} it is enough to compute from a
hyperarithmetically generic set maximal chains for computable wpos with height
of the form $\omega^{\alpha}$.\smallskip

If $P$ is such a computable wpo let $C$ be a maximal chain in $P$. Then the
set $I= \{\,{x \in P}\mid{\exists y \in C\, x \leq_{P} y}\,\}$ is downward
closed and hence, by Observation \ref{dc}, computable. Moreover
$\operatorname{ht}(I) = \omega^{\alpha}$ and $I$ has a cofinal chain of length
$\omega^{\alpha}$.

Thus, to prove Theorem \ref{gen} it suffices to compute from a
hyperarithmetically generic set maximal chains for computable wpos with height
of the form $\omega^{\alpha}$ which have cofinal chains of length
$\omega^{\alpha}$.\smallskip

We define for each $0<\alpha<\omega_{1}^{\mathrm{CK}}$ a computable operator
$\Phi_{\alpha}$ such that if $P$ is a partial order and $G$ is generic enough
we have:

\begin{itemize}
\item $\Phi_{\alpha}(P,G)$ is a chain in $P$ of order type at most
$\omega^{\alpha}$;

\item if $P$ has a cofinal chain of length $\omega^{\alpha}$, then
$\Phi_{\alpha}(P,G) $ has order type $\omega^{\alpha}$.
\end{itemize}

It is then clear that if $P$ is a wpo with $\operatorname{ht}(P)=\omega
^{\alpha}$ and a cofinal chain of length $\omega^{\alpha}$, then $\Phi
_{\alpha}(P,G)$ is a maximal chain in $P$. If moreover $P$ is computable then
$\Phi_{\alpha}(P,G)$ is $G$-computable, as desired.

The $\Phi_{\alpha}$s are defined by induction on $\alpha$.

\subsection{The case $\alpha= 1$}

For $\alpha=1$ we do not use the generic set at all, and thus we write
$\Phi_{1}(P)$. Given an enumeration $\{\,{x_{n}}\mid{n \in\mathbb{N} }\,\}$ of
$P $, define $\Phi_{1}(P)$ recursively as follows: let $x_{n} \in\Phi_{1}(P)$
if and only if for all $m<n$ with $x_{m} \in\Phi_{1}(P)$ we have $x_{m}
\leq_{P} x_{n}$. It is clear that $\Phi_{1}(P)$ is a chain of order type
$\leq\omega$ and if $P$ has a cofinal chain of length $\omega$ (so that it has
no maximal element), then $\Phi_{1}(P)$ has order type $\omega$.\smallskip

\subsection{The case $\alpha=2$}

We now consider explicitly the case $\alpha=2$, which is the blueprint for the
general case. We need to define the computable operator $\Phi_{2}$.

Using $G$, we define sequences $\langle a_{i}:i\in\mathbb{N}\rangle$,
$\langle\bar{b}_{i}:i\in\mathbb{N}\rangle$, $\langle k_{i}:i\in\mathbb{N}%
\rangle$ with $a_{i}\in P$, $\bar{b}_{i}\in P^{<\omega}$, $k_{i}\in\mathbb{N}$
and $a_{i}<_{P}a_{i+1}$ as follows. Let $k_{0}$ be the first $k$ such that
$f_{G}(k)$ is a code for a tuple $\langle a,\bar{b}\rangle$ with $a\in P$ and
$\bar{b}\in P^{<\omega}$. Let $a_{0}=a$ and $\bar{b}_{0}=\bar{b} $. Now, given
$k_{i},a_{i},\bar{b}_{i}$, let $k_{i+1}$ be the first $k>k_{i}$ such that
$f_{G}(k)$ is a code for a tuple $a\in P$, $\bar{b}\in P^{<\omega} $ and
$a_{i}<_{P}a$. (If $a_{i}$ happens to be maximal in $P$, we will wait forever
for $k_{i+1}$, i.e. the sequence is finite.) Let $a_{i+1}=a$ and $\bar
{b}_{i+1}=\bar{b}$.

For each $i$, let
\[
P_{i} = P_{\bar{b}_{i}} \cap P_{[a_{i},a_{i+1})}.
\]

Then let
\[
\Phi_{2}(P,G) = \bigcup_{i \in\mathbb{N} } \Phi_{1}(P_{i}).
\]
We claim that $\Phi_{2}$ is the computable operator we need.

First, $\Phi_{2}(P,G)$ is a chain, because each $\Phi_{1}(P_{i})$ is a chain
and if $i<j$ then every element of $P_{i}$ is below every element of $P_{j}$.
Since every $\Phi_{1}(P_{i})$ has order type at most $\omega$, the order type
of $\Phi_{2}(P,G)$ is at most $\omega^{2}$.

Second, we need to show that $\Phi_{2}(P,G)$ is computable uniformly in $P$
and $G$. Take $x\in P$. If $x <_{P} a_{0}$, then $x \notin\Phi_{2}(P,G)$.
Otherwise, we go through the definition of $a_{0}, \bar{b}_{0}, a_{1}, \bar
{b}_{1}, \dots$ until we find and $i$ such that either $x \in P_{[a_{i}%
,a_{i+1})}$ or $x \,|_{P}\, a_{i}$. By the $1$-genericity of $G$, we will
eventually find such an $i$. If $x \,|_{P}\, a_{i}$, then $x \notin\Phi
_{2}(P,G)$. If $x \in P_{[a_{i},a_{i+1})}$ then $x \in\Phi_{2}(P,G)$ if and
only if $x \in\Phi_{1}(P_{i}) $.

Third, we need to prove that if $P$ has a cofinal chain of length $\omega^{2}%
$, then $\Phi_{2}(P,G)$ has order type $\omega^{2}$. We claim that with this
hypothesis, there are infinitely many $i$'s such that $P_{i}$ has a cofinal
$\omega$-chain. The reason is that every $x \in P$ is bounded by an element of
the cofinal $\omega^{2}$-chain, and hence it is bounded by a whole $\omega
$-piece of this chain. That is, for each $x \in P$, there exists $a,
a^{\prime}, \bar{b}$ such that $x \leq_{P} a <_{P} a^{\prime}$ and $P_{\bar
{b}} \cap P_{[a,a^{\prime})}$ has a cofinal $\omega$-chain. So, by genericity,
we will be choosing such $a, a^{\prime}, \bar{b}$ infinitely often. Therefore,
for all such $i$, we have that $\Phi_{1}(P_{i})$ is an $\omega$-chain. Hence
$\Phi_{2}(P,G)$ has order type $\omega^{2}$.

\subsection{The general case}

Let $\alpha>0$ be a computable ordinal. We can fix a sequence $\{\,{\alpha
_{i}}\mid{i\in\mathbb{N} }\,\}$ with $\alpha_{i} \leq\alpha_{i+1} < \alpha$
such that $\omega^{\alpha}= \sum_{i\in\mathbb{N} } \omega^{\alpha_{i}}$ (if
$\alpha=\beta+1$ we can take $\alpha_{i}=\beta$ for every $i$, while if
$\alpha$ is limit it suffices to take an increasing cofinal sequence in
$\alpha$). Notice that $\omega^{\alpha}= \sum_{i\in A} \omega^{\alpha_{i}}$
whenever $A \subseteq\mathbb{N} $ is infinite.

We now define $\Phi_{\alpha}(P,G)$ as follows. Using $G$, define sequences
$\langle a_{i}: i \in\mathbb{N} \rangle$ and $\langle\bar{b}_{i}: i
\in\mathbb{N} \rangle$ with $a_{i} <_{P} a_{i+1}$ exactly as in the case
$\alpha=2$. We again let $P_{i} =P_{\bar{b}_{i}} \cap P_{[a_{i},a_{i+1})}$
and, using effective transfinite recursion, let
\[
\Phi_{\alpha}(P,G) = \bigcup_{i\in\mathbb{N} } \Phi_{\alpha_{i}}(G,P_{i}).
\]

We prove by transfinite induction on $\alpha$ that $\Phi_{\alpha}$ is a
computable operator with the desired properties.

The proof that, $\Phi_{\alpha}(P,G)$ is a chain and is computable uniformly in
$P$ and $G$ is exactly as in the case $\alpha=2$. Inductively it is clear that
if $P$ is a partial order and $G$ is generic enough $\Phi_{\alpha}(P,G)$ is a
chain in $P$ of order type $\leq\omega^{\alpha}$.

Now we need to prove that if $P$ has a cofinal chain of length $\omega
^{\alpha}$, then $\Phi_{\alpha}(P,G)$ has order type exactly $\omega^{\alpha}%
$. To this end we claim that in this case, there are infinitely many $i$'s
such that $P_{i}$ has a cofinal chain of length $\omega^{\alpha_{i}}$. The
reason is that every element $x \in P$ is below an element of the
$\omega^{\alpha}$-chain, and hence it is below a whole $\omega^{\alpha_{i}}%
$-piece of this chain, for all $i$. That is, for each $x \in P$ and each $i$,
there exists $a,a^{\prime},\bar{b}$ such that $x \leq_{P} a <_{P} a^{\prime}$
and $P_{\bar{b}}\cap P_{[a,a^{\prime})} $ has a cofinal $\omega^{\alpha_{i}}%
$-chain. So, by genericity, we will be choosing $a,a^{\prime},\bar{b}$ with
this property infinitely often. Therefore, by our induction hypothesis, for
each $i$ for which we make such a choice, we have that $\Phi(P_{i})$ is an
$\omega^{\alpha_{i}}$-chain. Therefore $\Phi_{\alpha}(P,G)$ has order type
$\omega^{\alpha}$.

\subsection{On the amount of genericity}

The only place where we need $G$ to meet complex dense sets is when we require
infinitely many $i$'s such that $P_{i}=P_{\bar{b}_{i}}\cap P_{[a_{i},a_{i+1}%
)}$ has a cofinal chain of length $\omega^{\alpha_{i}}$. Deciding if a wpo has
a cofinal chain of order type $\omega^{\alpha}$ is a $\Pi_{2\cdot\alpha}$ question:

\begin{itemize}
\item $P$ has a cofinal chain of order type $\omega$ iff it is an ideal and
has no maximal elements, which is the conjunction of two $\Pi^{0}_{2}$ conditions;

\item $P$ has a cofinal chain of order type $\omega^{\alpha}$ iff for all $i
\in\mathbb{N} $ and $x \in P$, there exists $a, a^{\prime}, \bar{b} \in P$
such that $x <_{P} a <_{P} a^{\prime}$ and $P_{\bar{b}} \cap P_{[a,a^{\prime
})}$ has a cofinal chain of order type $\omega^{\alpha_{i}}$.
\end{itemize}

\section{Nonuniformity \label{nonuniform}}

Our proof of Theorem \ref{gen} is nonuniform. In \S \ref{reduction} we first
need to know $\operatorname{ht}(P)$ and its Cantor normal form, then we need
to find the $a_{i}$, and eventually to compute $I$. Later in the proof, the
choice of the appropriate $\Phi_{\alpha}(P,G)$ is also nonuniform. The proofs
in $\mathsf{ATR}_{0}$ that there are hyperarithmetic maximal and strongly
maximal chains in every countable wpo in \cite{MLE} also show that there are
hyperarithmetic such chains for every computable wpo but are similarly
nonuniform (as are the ones for computable maximal linear extensions in
\cite{Mont07}). This nonuniformity cannot be avoided. We consider our results
in this paper.

If $L_{0}$ and $L_{1}$ are computable well-orders (of different length) we can
consider the wpo $L_{0}\oplus L_{1}$, the disjoint union of the two
well-orders. A maximal chain in $L_{0}\oplus L_{1}$ is included in some
$L_{i}$ for some $i<2$ and which one it is in is, of course, uniformly
computable from the maximal chain and the wpo. Then $L_{1-i}$ embeds in
$L_{i}$ and so $L_{i}$ is the longer chain. By Theorem \ref{AK}, with proper
choice of the $L_{j}$ as prescribed there, this decision can uniformly code
membership in any hyperarithmetic set. Thus we have the following
nonuniformity result:

\begin{theorem}
\label{nonunif}There is no hyperarithmetic procedure which calculates a
maximal chain in every computable wpo. In fact, any function $f(e,n)$ such
that, for every computable wpo $P$ with index $e$, $\lambda nf(e,n)$ is (the
characteristic function of) a maximal chain in $P$ must compute every
hyperarithmetic set $X$
\end{theorem}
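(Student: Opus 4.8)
The plan is to exploit the sketch already given in the paragraph immediately preceding the theorem statement. Fix a hyperarithmetic set $X$; we want to show that any function $f(e,n)$ which, on every index $e$ of a computable wpo $P$, yields a maximal chain $\lambda n f(e,n)$ in $P$, must compute $X$. The idea is to build, uniformly, a single computable wpo whose unique ``side'' containing a maximal chain codes membership of a given number in $X$, and then to run this construction for each $n$ in parallel so that $f$ plus a fixed oracle for the construction decides $X$.

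First I would write $X$ in the form needed for Theorem~\ref{AK}: since $X$ is hyperarithmetic it is $\boldsymbol{\Pi}^0_{2\alpha+1}$ for some computable $\alpha$, and likewise $\bar X$ is $\boldsymbol{\Pi}^0_{2\alpha+1}$; applying Theorem~\ref{AK} to $\bar X$ we get a uniformly computable sequence of linear orders $L_n$ with $L_n \cong \omega^{\alpha}$ when $n \in X$ and $L_n \cong \omega^{\alpha+1}$ when $n \notin X$ (using that $n\in X \iff n \notin \bar X$), all computable uniformly in $n$ and the relevant indices. Next, for each $n$ I would form the wpo $Q_n = L_n \oplus \omega^{\alpha+1}$, the disjoint union, which is computable uniformly in $n$; write $e(n)$ for a computable index for $Q_n$ obtained effectively from $n$. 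Since $\mathrm{ht}(Q_n)$ is $\omega^{\alpha+1}$ in both cases and any maximal chain must sit inside one of the two summands and have that order type, the maximal chain lies in the $\omega^{\alpha+1}$-summand precisely when $n \notin X$ (because when $n \in X$ the first summand is only $\omega^{\alpha}$, which is too short), and it lies in $L_n$ exactly when $n \in X$.

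Now given $f$ as hypothesized, $\lambda m f(e(n),m)$ is a maximal chain $C_n$ in $Q_n$. From $C_n$ and the (computable) presentation of $Q_n$ we can decide computably which summand $C_n$ meets simply by searching for any element of $C_n$ and checking, using $\leq_{Q_n}$, which half of the disjoint union it belongs to; this membership question is decided by a single query of the form ``is $f(e(n),m)=1$ for the least $m$ coding an element, and does that element lie in the left summand'', uniformly in $n$. Hence $n \mapsto [\text{$C_n$ meets the left summand}]$ is computable from $f$, and by the preceding paragraph this is exactly the characteristic function of $X$. Therefore $X \leq_T f$. Since $X$ was an arbitrary hyperarithmetic set, $f$ computes every hyperarithmetic set, which is the assertion of the theorem; the first sentence of the theorem follows since a hyperarithmetic procedure would in particular give such an $f$ hyperarithmetically, whereas no hyperarithmetic set computes all hyperarithmetic sets.

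The only genuine subtlety, and the step I would be most careful about, is the bookkeeping that makes everything uniform in $n$: Theorem~\ref{AK} promises the sequence $L^{A}_n$ uniformly in an index for $\alpha$ and an index for $A$ as a $\boldsymbol{\Pi}^0_{2\alpha+1}$ set, so I must check that the index for $\bar X$ as a $\boldsymbol{\Pi}^0_{2\alpha+1}$ set, and hence the index $e(n)$ for $Q_n$, can be produced by a single computable function of $n$ — this is routine but must be stated, since otherwise one only gets a nonuniform coding. Everything else (the order-type computation $\mathrm{ht}(Q_n)=\omega^{\alpha+1}$, the fact that a maximal chain in a disjoint union of well-orders lies in the longer summand, and the computability of ``which summand'' from the chain) is immediate. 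I would present the argument in essentially the three-paragraph form above, with the uniformity remark folded into the construction of $Q_n$.
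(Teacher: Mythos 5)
Your overall strategy---code $X$ into a uniformly computable family of two-summand wpos $Q_n$, feed an index $e(n)$ to $f$, and read off from the chain which summand it lies in---is exactly the route the paper sketches in the paragraph before the theorem, but your specific choice of $Q_n$ breaks the coding. Taking $Q_n = L_n \oplus \omega^{\alpha+1}$ with a \emph{fixed} second summand means that in one of the two cases (whichever case makes $L_n \cong \omega^{\alpha+1}$) both summands have order type $\omega^{\alpha+1}$, and then a maximal chain in $Q_n$ may lie entirely inside either summand: nothing in the hypothesis on $f$ forces it into a particular one. So ``which summand does $C_n$ meet'' is informative in only one direction (if $C_n \subseteq L_n$ then $L_n$ must have type $\omega^{\alpha+1}$), and the map $n \mapsto [\,C_n \text{ meets the left summand}\,]$ need not be the characteristic function of $X$, so $X \leq_T f$ does not follow. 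Your case analysis is also internally inconsistent: the parenthetical ``when $n\in X$ the first summand is only $\omega^{\alpha}$, which is too short'' shows that the chain sits in the fixed summand exactly when $n\in X$, the opposite of what you assert; and there is a sign slip in the use of Theorem \ref{AK}, since applying it to $\bar X$ yields $L_n \cong \omega^{\alpha}$ for $n \notin X$ and $L_n \cong \omega^{\alpha+1}$ for $n \in X$, not the other way around.

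The repair is what the paper actually does: since $X$ is hyperarithmetic it is $\boldsymbol{\Delta}^0_{2\alpha+1}$ for a suitable computable $\alpha$, so Theorem \ref{AK} applies to both $X$ and $\bar X$, and one sets $Q_n = L^{X}_n \oplus L^{\bar X}_n$. Then for every $n$ exactly one summand has order type $\omega^{\alpha+1}$ and the other $\omega^{\alpha}$, every maximal chain lies in the longer summand, and which summand that is decides $n \in X$ in both directions; your uniformity bookkeeping and your final derivation of the first sentence of the theorem then go through unchanged. (A fixed second summand could also work, but only if its order type lies strictly between $\omega^{\alpha}$ and $\omega^{\alpha+1}$, for instance $\omega^{\alpha}\cdot 2$; the choice $\omega^{\alpha+1}$ itself cannot.)
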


As for Theorem \ref{gen}, if $G$ is hyperarithmetically generic and
$\alpha>\beta$ then $G^{(\beta)}$ does not compute $0^{(\alpha)}$. (Looking
toward the next theorem, one might also point out, that the ordinals
computable from such a $G$ are just the computable ordinals.) Combining this
fact with the previous arguments shows that the procedure of computing a
maximal chain in a computable wpo from a hyperarithmetically generic $G$
cannot be uniform either.

\begin{theorem}
There
is no recursive ordinal $\beta$, number $i$ and hyperarithmetically generic
$G$ such that for every index $e$ for a computable wpo $P$, $\lambda
n.\Phi_{i}^{G^{(\beta)}}(e,n)$ is (the characteristic function of) a maximal
chain in $P$.
\end{theorem}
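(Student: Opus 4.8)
The plan is to combine the nonuniformity mechanism already used for Theorem~\ref{nonunif} with the genericity-preservation facts recorded in the preceding paragraph. Suppose, toward a contradiction, that such $\beta$, $i$ and hyperarithmetically generic $G$ existed. Since $\beta$ is a recursive ordinal and $G$ is hyperarithmetically generic, the set $G^{(\beta)}$ is a perfectly concrete set, and the key point is that it sits low in the hyperarithmetic hierarchy relative to $G$: for any $\alpha<\omega_1^{\mathrm{CK}}$ with $\alpha>\beta$ we have $0^{(\alpha)}\nleq_T G^{(\beta)}$, as pointed out in the discussion following Theorem~\ref{gen}. (More precisely, $G$ being $\alpha$-generic makes $G^{(\beta)}$ computably incomparable with $0^{(\alpha)}$ for such $\alpha$; this is standard for Cohen genericity relative to the jump.)

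Now I would instantiate the coding wpo from the proof of Theorem~\ref{nonunif}. Pick any hyperarithmetic set $X$ that is not computable from $G^{(\beta)}$ — for instance, fixing a recursive ordinal $\alpha>\beta$, take $X=0^{(\alpha)}$, which is hyperarithmetic and, by the previous paragraph, satisfies $X\nleq_T G^{(\beta)}$. Using Theorem~\ref{AK} with the appropriate choice of linear orders $L_0^{X}$, $L_1^{X}$ of lengths coding membership in $X$, form the computable wpo $P = L_0 \oplus L_1$ as in the proof of Theorem~\ref{nonunif}, and let $e$ be an index for $P$. By hypothesis, $\lambda n.\Phi_i^{G^{(\beta)}}(e,n)$ is the characteristic function of a maximal chain $C$ in $P$, so $C\leq_T G^{(\beta)}$. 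As observed in the proof of Theorem~\ref{nonunif}, $C$ is contained in exactly one $L_j$, the index $j$ is uniformly computable from $C$ and $P$, and $L_j$ is the longer of the two well-orders; by the construction of the $L_j$ via Theorem~\ref{AK}, this information uniformly decides membership in $X$. Hence $X\leq_T C\leq_T G^{(\beta)}$, contradicting the choice of $X$.

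The only step requiring a little care is making sure the genericity fact I invoke is actually available at the level of genericity that $G$ possesses: $G$ is hyperarithmetically generic, i.e.\ $\alpha$-generic for every recursive $\alpha$, and we need $G^{(\beta)}$ to fail to compute $0^{(\alpha)}$ for some recursive $\alpha>\beta$. This is exactly the content of the sentence ``if $G$ is hyperarithmetically generic and $\alpha>\beta$ then $G^{(\beta)}$ does not compute $0^{(\alpha)}$'' already asserted in the excerpt just before the theorem, so I may simply cite it. I would therefore expect essentially no obstacle here — the proof is a one-paragraph diagonalization — and I would write it as a short remark-style argument immediately after the theorem statement, taking care only to state explicitly which hyperarithmetic set is being coded and why it escapes $G^{(\beta)}$.
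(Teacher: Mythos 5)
Your argument is correct and is essentially the paper's own proof, which is exactly the sentence preceding the theorem: combine the coding of Theorem \ref{nonunif} with the fact that for hyperarithmetically generic $G$ and recursive $\alpha>\beta$ we have $0^{(\alpha)}\nleq_T G^{(\beta)}$. The one slip to repair is that a single wpo $P=L_0\oplus L_1$ and its single maximal chain $C$ only reveal one bit (which summand is longer), so ``$X\leq_T C$'' does not follow; instead use the uniformly computable family given by Theorem \ref{AK}, say $P_m=L_m^{X}\oplus L_m^{\bar X}$ with indices $e_m$ computable uniformly in $m$, apply the hypothesized procedure $\lambda n.\Phi_i^{G^{(\beta)}}(e_m,n)$ to each $e_m$ and read off the longer side to decide $m\in X$ --- equivalently, just cite Theorem \ref{nonunif} applied to $f(e,n)=\Phi_i^{G^{(\beta)}}(e,n)$ to get $0^{(\alpha)}\leq_T G^{(\beta)}$ and the desired contradiction.
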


\bibliographystyle{alpha}
\bibliography{MC}

\begin{thebibliography}{AMS92}

\bibitem[AK90]{AK}
C.~J. Ash and J.~F. Knight.
\newblock Pairs of recursive structures.
\newblock {\em Ann. Pure Appl. Logic}, 46(3):211--234, 1990.

\bibitem[AMS92]{AMS92}
Ron Aharoni, Menachem Magidor, and Richard~A. Shore.
\newblock On the strength of {K}\"onig's duality theorem for infinite bipartite
  graphs.
\newblock {\em J. Combin. Theory Ser. B}, 54(2):257--290, 1992.

\bibitem[Bla72]{Bla72}
Andreas Blass.
\newblock Complexity of winning strategies.
\newblock {\em Discrete Math.}, 3:295--300, 1972.

\bibitem[dJP77]{JP}
D.~H.~J. de~Jongh and Rohit Parikh.
\newblock Well-partial orderings and hierarchies.
\newblock {\em Nederl. Akad. Wetensch. Proc. Ser. A {\bf 80}=Indag. Math.},
  39(3):195--207, 1977.

\bibitem[Har05]{Harz}
Egbert Harzheim.
\newblock {\em Ordered sets}, volume~7 of {\em Advances in Mathematics
  (Springer)}.
\newblock Springer, New York, 2005.

\bibitem[Mon07]{Mont07}
Antonio Montalb{\'a}n.
\newblock Computable linearizations of well-partial-orderings.
\newblock {\em Order}, 24(1):39--48, 2007.

\bibitem[MS81]{MS}
E.~C. Milner and N.~Sauer.
\newblock On chains and antichains in well-founded partially ordered sets.
\newblock {\em J. London Math. Soc. (2)}, 24(1):15--33, 1981.

\bibitem[MS11]{MLE}
Alberto Marcone and Richard Shore.
\newblock The maximal linear extension theorem in second order arithmetic.
\newblock {\em Archive for Mathematical Logic}, 50:543--564, 2011.

\bibitem[Sac90]{Sacks}
Gerald~E. Sacks.
\newblock {\em Higher recursion theory}.
\newblock Perspectives in Mathematical Logic. Springer-Verlag, Berlin, 1990.

\bibitem[Sch81]{Schm81}
Diana Schmidt.
\newblock The relation between the height of a well-founded partial ordering
  and the order types of its chains and antichains.
\newblock {\em J. Combin. Theory Ser. B}, 31(2):183--189, 1981.

\bibitem[Sim94]{Sim94}
Stephen~G. Simpson.
\newblock On the strength of {K}\"onig's duality theorem for countable
  bipartite graphs.
\newblock {\em J. Symbolic Logic}, 59(1):113--123, 1994.

\bibitem[Sim09]{Sim09}
Stephen~G. Simpson.
\newblock {\em Subsystems of second order arithmetic}.
\newblock Perspectives in Logic. Cambridge University Press, Cambridge, second
  edition, 2009.

\bibitem[Wol67]{Wolk}
E.~S. Wolk.
\newblock Partially well ordered sets and partial ordinals.
\newblock {\em Fund. Math.}, 60:175--186, 1967.

\end{thebibliography}

\end{document}